\definecolor{red}{rgb}{1,0,0}
\definecolor{blue}{rgb}{0,0,1}
\definecolor{green}{rgb}{0,.6,0}
\newtheorem{thm}{Theorem}[section]
\newtheorem{conj}[thm]{Conjecture}
\newtheorem{quest}[thm]{Question}
\theoremstyle{definition}
\theoremstyle{definition}
\theoremstyle{definition}
\theoremstyle{definition}
\newtheorem*{claim}{Claim}
\newcommand{\bit}{\begin{itemize}}
\newcommand{\eit}{\end{itemize}}
\newcommand{\ben}{\begin{enumerate}}
\newcommand{\een}{\end{enumerate}}
\newcommand{\beq}{\begin{equation}}
\newcommand{\eeq}{\end{equation}}
\newcommand{\bea}{\begin{eqnarray*}} % * means no number
\newcommand{\eea}{\end{eqnarray*}}
\newcommand{\bpf}{\begin{proof}}
\newcommand{\epf}{\end{proof}\ms}
\newcommand{\bmt}{\begin{bmatrix}}
\newcommand{\emt}{\end{bmatrix}}
\newcommand{\ms}{\medskip}
\title{Rainbow matchings of size $m$ in graphs with total color degree at least $2mn$}
\author{J\"{u}rgen Kritschgau\thanks{Department of Mathematics, Iowa State University, \texttt{jkritsch@iastate.edu}}}
\begin{document}
\maketitle

\begin{abstract}
    The existence of a rainbow matching given a minimum color degree, proper coloring, or triangle-free host graph has been studied extensively. This paper, generalizes these problems to edge colored graphs with given total color degree. In particular, we find that if a graph $G$ has total color degree $2mn$ and satisfies some other properties, then $G$ contains a matching of size $m$; These other properties include $G$ being triangle-free, $C_4$-free, properly colored, or large enough. 
\end{abstract}

\section{Introduction}

Given a graph $G$, let $V(G)$ denote the vertex set of $G$ and $E(G)$ denote the edge set of $G$. If $S\subseteq V$, then $G[S]$ denotes subgraph induced by the vertices in $S$. A graph $G$ is an $m$-matching if $G$ contains exactly $m$ edges, $2m$ vertices, and $e\cap e'=\{\}$ for all edges $e\neq e'$ in $E(G)$. An edge coloring $c:E(G)\to [r]=\{1,\dots, r\}$ is an assignment of colors to edges. A proper edge coloring of a graph is an edge coloring such that $c(e)\neq c(e')$ whenever $e\cap e'\neq \varnothing$ and $e\neq e'$. The colors used on a graph will be denoted $c(G)$, and $R$ will denote a generic color class. If $X,Y\subseteq V(G)$, then $c(X,Y)$ will denote the set of colors used on edges of the form $xy$, where $x\in X$, $y\in Y$. A graph $G$ is rainbow under $c$ if $c$ is injective on $E(G)$. In particular, a rainbow matching is a matching where each edge receives a unique color within the matching. The color degree of a vertex $v$ is denoted $\hat d_G(v)$, which is the number of colors $c$ assigns to edges incident upon $v$ in $G$; when it is clear from the context what $G$ is, we will drop the subscript. Let $\hat d^R(v)$ denote the color $R$ degree of $v$, that is, the number of $R$ colored edges incident upon $v$. The total color degree of $G$ with respect to $c$ is the sum of all the color degrees in the graph and denoted $$\hat d(G)=\sum_{v\in V(G)}\hat d(v).$$ The average color degree of a graph $G$ is obtained by dividing the total color degree by $|V(G)|$, and is an equivalent notion. The minimum color degree of $G$ is denoted $\hat \delta(G)$.   Finally, let $G-v$ denote the graph $G$ with the vertex $v$ deleted, and $G-R$ denote the graph $G$ with the edges in color class $R$ deleted.  When convenient, I will let $c(e)$ denote a color class so that $G-c(e)$ denotes the graph $G$ without the edges in color class containing the edge $e$.

Rainbow matchings in graphs were originally studied due to their connection to transversals of Latin squares \cite{R},\cite{S}.   However, the existence of rainbow matchings has also been studied in its own right. In \cite{LW}, Li and Wang conjectured that any graph with $\hat \delta(G)\geq m\geq 4$ contains a rainbow matching of size $\lceil \frac{m}{2}\rceil$. This conjecture was partially confirmed in \cite{LSWW}, and fully confirmed in \cite{KY}.

 Wang asked for a function $f$ such that any such that any properly edge colored graph $G$ with $|V(G)|\geq f(\hat \delta(G))$ contains a rainbow matching of size $\hat \delta(G)$ \cite{W}. Diemunsch et al. determined that $|V(G)|\geq\frac{98}{23}\hat \delta(G)$ is sufficient \cite{DFLMPW}. This problem was generalized to find a function $f$ such that any edge colored graph $G$ with $|V(G)|\geq f(\hat \delta(G))$ contains a rainbow matching of size $\hat \delta(G)$.  The authors of \cite{KPY} found that $|V(G)|\geq \frac{17}{4}\hat \delta(G)^2$ sufficed. This was improved to $4\hat \delta(G)-4$ for $\hat \delta(G)\geq 4$ in \cite{GS} and \cite{LT} independently.

 Local  Anti-Ramsey theory asks Anti-Ramsey type questions with assumptions about the local structure of the host graph. In particular, Local Anti-Ramsey theory is about the minimum $k$ such that any coloring of $K_n$ with $\hat \delta(G)\geq k$ contains a rainbow copy of $H$. In this vein, Wang's question can be posed as follows: given $k$, what is the smallest $N$ such that any properly edge colored graph $G$ with $|V(G)|\geq N$ and $\hat \delta (G)\geq k$ contains a rainbow matching of size $k$? Furthermore, proper edge-coloring and triangle-free properties play similar roles in restricting the structure of a host graph.
 
 The local assumptions in Anti-Ramsey theory are interesting in so far as they highlight the relationship between a local parameter and the target graph. In much of the rainbow matching literature, there are confounding local assumptions. For example, \cite{DFLMPW}, \cite{LWZ}, and \cite{W} all consider hosts graphs that have a prescribed minimum color degree and are properly edge colored. In this case, an intuitive interpretation is that the minimum color degree and proper edge-coloring properties spread the colors apart in the host graph. As one would expect, this makes it easier to find a large rainbow matching. However, it is unclear whether both the minimum color degree and proper edge coloring property are necessary to find a large matching.
 
 The goal of this paper is to shed light on the relationship between local assumptions and rainbow matchings. Rather than considering host graphs with a prescribed  minimum color degree, we will consider host graphs with a prescribed average color degree. This is motivated in part by a question posed during the Rocky Mountain and Great Plains Graduate Research Workshop in Combinatorics in 2017. 
 
 \begin{quest}\label{avedegreeq}
 If $G$ is an edge colored graph on $n$ vertices with $\hat d(G)\geq 2mn$, does $G$ contain a rainbow matching of size $m$?
 \end{quest}
 
  Section \ref{trianglefree} considers this question for triangle-free and $C_4$-free host graphs. In the case of triangle-free graphs, we will prove the slightly stronger statement that if $G$ is a graph with $\hat d(G)>2mn$, then there exists a rainbow matching of size $m+1$.  Section \ref{proper} pertains to properly edge colored host graphs. Finally, Section \ref{general} considers edge colored graphs with total color degree $2mn$, but with no further assumptions.

\section{Triangle-free and $C_4$-free Graphs}\label{trianglefree}

In this section, we consider triangle-free and $C_4$-free graphs. 

\begin{thm}\label{trifreethm} Let $G$ be a triangle-free graph on $n$ vertices. Let $c$ be an edge coloring of $G$ with $\hat d(G)>2mn$. Then $c$ admits a rainbow matching of size $m+1$.
\end{thm}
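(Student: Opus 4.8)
The plan is to induct on $m$. The base case $m=0$ is immediate: $\hat d(G)>0$ forces an edge, which is a rainbow matching of size $1$. For the inductive step I would select a single edge $e=uv$, pass to the graph $G'=G-u-v-c(e)$ (still triangle-free, being an induced-edge subgraph of $G$, and on $n-2$ vertices), and arrange that $\hat d(G')>2(m-1)(n-2)$. The inductive hypothesis applied with parameter $m-1$ then yields a rainbow matching of size $m$ in $G'$; since the whole color class $c(e)$ has been removed from $G'$, appending $e$ produces a rainbow matching of size $m+1$ in $G$. Thus everything reduces to choosing $e$ so that the passage from $G$ to $G'$ is cheap: writing $L=\hat d(G)-\hat d(G')$, I need $L\le 2n+4m-4$, since $\hat d(G)>2mn$ would then give $\hat d(G')>2mn-(2n+4m-4)=2(m-1)(n-2)$, exactly the quantity the hypothesis requires.

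The core estimate is to bound $L$, and this is precisely where triangle-freeness enters. The loss splits into three contributions: the color degrees $\hat d(u)+\hat d(v)$ that disappear with $u$ and $v$; the colors destroyed at the remaining neighbors of $u,v$, which is at most $(\deg u-1)+(\deg v-1)$ because each edge from $\{u,v\}$ to $V\setminus\{u,v\}$ can kill at most one color at its far endpoint; and the ``spillover'' $n''$ from deleting $c(e)$, namely the number of vertices outside $\{u,v\}$ incident to a $c(e)$-edge disjoint from $\{u,v\}$. Since $G$ is triangle-free, $N(u)$ and $N(v)$ are disjoint, so $\deg u+\deg v\le n$; hence $\hat d(u)+\hat d(v)\le n$, the neighbor term is at most $n-2$, and $L\le 2n-2+n''$. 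It therefore suffices to exhibit one edge whose color class spills over onto at most $4m-2$ vertices. This works immediately whenever some color class touches at most $4m$ vertices, and also whenever the class has a vertex cover of size two inside $e$; in particular it works when $c(e)$ is a monochromatic star, because deleting the center already removes every edge of that color, forcing $n''=0$.

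The main obstacle is the complementary regime, in which every color class is simultaneously large (meeting more than $4m$ vertices) and robustly spread out, so that no single edge deletion is cheap. Here I would argue directly rather than inductively. Two structural facts are available for free: triangle-freeness gives $\hat d(G)\le 2\ord{E(G)}\le n^2/2$, so $\hat d(G)>2mn$ already forces $n>4m$; and since $\hat d(G)=\sum_R n_R\le rn$ where $r$ is the number of colors, there are at least $2m+1$ colors. I would then build the rainbow matching greedily: if at least $m+1$ color classes have vertex cover number exceeding $2m$, then at each step the $2m$ vertices already used cannot cover a fresh such class, so that class still offers a usable edge disjoint from them. The genuinely delicate point — and the step I expect to demand the most care — is closing the gap between the inductive regime (a cheap edge exists) and this greedy regime (enough classes are uncoverable), i.e.\ showing that a color class which is too spread out to be deleted cheaply is always spread out enough to survive the greedy blocking set, so that the boundary case near $n''=4m-2$ is covered by one argument or the other.
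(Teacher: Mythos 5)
Your inductive skeleton is sound as far as it goes: the base case, the reduction to finding an edge $e=uv$ with loss $L\le 2n+4m-4$, the triangle-free bounds $\hat d(u)+\hat d(v)\le n$ and $(\deg u-1)+(\deg v-1)\le n-2$, and the greedy argument for color classes whose vertex cover number exceeds $2m$ are all correct. The problem is that your two regimes do not cover all cases, and the gap you flag at the end is genuine, not merely delicate. Concretely, consider a color class that is the disjoint union of three large stars. It has vertex cover number $3\le 2m$, so it contributes nothing to the greedy regime; it touches far more than $4m$ vertices; and it admits no cheap edge, since deleting the two endpoints of any of its edges kills at most one star and leaves two large stars spilling onto many vertices, forcing $n''\gg 4m-2$. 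Such colorings are compatible with your hypothesis: on $K_{n/2,n/2}$ (triangle-free), fix a partition of the left side into triples $\{a,b,c\}$, split the right side into thirds $R_1,R_2,R_3$, and for each triple use three colors, each joining $a,b,c$ to the three parts in a cyclic pattern. Every class is then three disjoint stars with about $n/6$ leaves, every right vertex has color degree $n/2$, and $\hat d(G)\ge n^2/4>2mn$ once $n>8m$. Here \emph{no} class has vertex cover number exceeding $2m$ and \emph{no} edge is cheap, so neither of your arguments applies; the theorem is still true for this coloring, but your proof cannot see it. Failure of the cheap-edge condition only forces classes to be spread over many vertices, not to have large vertex cover number, and that is exactly the implication your proof needs.

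For contrast, the paper avoids induction entirely. It takes a maximum rainbow matching $M$ of size $k\le m$ (for contradiction), chosen among all maximum rainbow matchings to maximize the number of colors appearing on $H=G[V(G)\setminus V(M)]$. Maximality forces every color on $H$ to be a matching color, say $c(H)=[j]$; triangle-freeness gives $\hat d(u_i)+\hat d(v_i)\le n$ for matching edges; a swap argument (exchange $u_iv_i$ with an edge of $H$ of the same color) gives the sharper bound $\hat d(u_i)+\hat d(v_i)\le 2(j+k)$ for $i\le j$; and vertices of $H$ have color degree at most $k+j$. Summing these bounds yields $2mn<\hat d(G)\le 2mn$, a contradiction. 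If you want to salvage your induction, the swap idea is the missing ingredient: a color class that blocks you globally can be traded against the structure of a maximum matching, which is information a purely local cheap-edge search never exploits.
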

%Notice that for all vertices $x\in V(G)$ with $\hat d(x)\geq 2m-2$, $v\in V(M)$; otherwise $M$ is not maximum.
\begin{proof}
Let $M$ be a maximum rainbow matching of size $k\leq m$ with edges $u_iv_i$ for $1\leq i\leq k$, such that the number of colors appearing on $G[V(G)\setminus V(M)]=H$ is maximized. Without loss of generality, suppose that $c(u_iv_i)=i$.  Since $G$ is triangle-free, $\hat d(u_i)+\hat d(v_i)\leq n$ for all $u_iv_i\in E(M)$. If $H$ has an edge $e$, then $c(e)\in [k]$. Without loss of generality, suppose that $c(H)=[j]$ for some $0\leq j\leq k$. Then for all $v\in V(H)$, we have $\hat d(v)\leq k+j$. Notice that if there exists an edge $e\in H$ with $c(e)=i$, then we can swap $e$ and $u_iv_i$ to conclude that $\hat d(u_i)+\hat d(v_i)\leq 2(j+k)$. 

Now consider

\begin{align*}
    2mn&<\sum_{i=1}^k\hat d(u_i)+\hat d(v_i)+\sum_{v\in H} \hat d_G(v)\\
    &\leq \sum_{i=1}^j\hat d(u_i)+d(v_i)+\sum_{i=j+1}^k\hat d(u_i)+\hat d(v_i)+\sum_{v\in H} \left( \hat d_H(v)+k\right)\\
    &\leq 2j(k+j)+(k-j)n+(n-2k)(j+k)\\
    &= 2jk+2j^2+2nk-2jk-2k^2\\
    &\leq 2j^2-2k^2+2nk\\
    &\leq 2nm.
\end{align*}

This is a contradiction; therefore, $k\geq m+1$. 
\end{proof}

A key element to the proof of Theorem \ref{trifreethm} is the bound $\hat d(v)+\hat d(u)\leq n$ where $uv$ is an edge in a maximal matching. We can obtain a similar bound in $C_4$-free graphs in order to prove the next theorem.

\begin{thm}\label{c4freethm}
Let $G$ be a $C_4$-free graph on $n$ vertices. Let $c$ be an edge coloring of $G$ with $\hat d(G)\geq 2mn$. Then $c$ admits a rainbow matching of size $m$.
\end{thm}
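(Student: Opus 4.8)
The plan is to mirror the structure of the proof of Theorem~\ref{trifreethm}, replacing the triangle-free degree bound $\hat d(u_i)+\hat d(v_i)\le n$ with the analogous bound available in $C_4$-free graphs. First I would take $M$ to be a maximum rainbow matching of size $k$ and assume for contradiction that $k\le m-1$; the goal is to reach a contradiction with $\hat d(G)\ge 2mn$. As before, let $H=G[V(G)\setminus V(M)]$, and observe that any edge of $H$ must carry a color in $[k]$, since otherwise we could extend $M$. Writing $c(H)=[j]$ for some $0\le j\le k$, every vertex $v\in V(H)$ satisfies $\hat d_G(v)\le \hat d_H(v)+k$, exactly as in the triangle-free case.

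The key new ingredient is the matched-edge degree bound. In a $C_4$-free graph, two vertices cannot share two common neighbors, so for an edge $u_iv_i$ the neighborhoods $N(u_i)$ and $N(v_i)$ overlap in at most the edge $u_iv_i$ itself plus a bounded correction. The natural statement, to be proven as a lemma preceding the theorem, is that $\hat d(u_i)+\hat d(v_i)\le n+1$ (or a similarly small additive term) for each edge $u_iv_i$ of a maximum rainbow matching in a $C_4$-free graph; I would verify this by counting that any common neighbor of $u_i$ and $v_i$ forces a $C_4$ unless it coincides with the matched structure. I expect this degree bound — pinning down the exact additive constant and confirming it survives the swapping argument — to be the main obstacle, since the extra $+1$ per edge accumulates to a $+k$ term that must be absorbed into the final inequality.

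With the degree bound in hand, I would run the same chain of inequalities as in Theorem~\ref{trifreethm}. The swapping argument gives $\hat d(u_i)+\hat d(v_i)\le 2(j+k)$ whenever color $i\in[j]$ appears on an edge of $H$, while the remaining $k-j$ matched edges contribute at most the $C_4$-free bound each. Combining the three sums — the $j$ swappable matched edges, the $k-j$ other matched edges, and the vertices of $H$ bounded by $\hat d_H(v)+k$ — should yield

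\begin{align*}
    2mn &\le \sum_{i=1}^{k}\bigl(\hat d(u_i)+\hat d(v_i)\bigr)+\sum_{v\in V(H)}\hat d_G(v)\\
    &\le 2j(k+j)+(k-j)(n+1)+(n-2k)(j+k),
\end{align*}

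after which I would expand and simplify to a polynomial in $j$, $k$, and $n$. The target is to show this upper bound is at most $2(m-1)n$ or otherwise strictly below $2mn$, contradicting the hypothesis. The accounting is slightly looser than the triangle-free case because the theorem only claims a matching of size $m$ from $\hat d(G)\ge 2mn$ (not $m+1$ from a strict inequality), so I expect the arithmetic to have just enough slack to close, with the linear $(k-j)$ correction from the $C_4$-free bound being the delicate term to control.
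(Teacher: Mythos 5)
Your outline follows the paper's strategy, and two of your ingredients are sound: the matched-edge bound $\hat d(u_i)+\hat d(v_i)\le n+1$ (adjacent vertices in a $C_4$-free graph have at most one common neighbor) and the target swap bound $2(j+k)$ for colors $i\le j$. The genuine gap is the step you wave through as carrying over ``exactly as in the triangle-free case'': the per-vertex bound $\hat d_G(v)\le \hat d_H(v)+k$ for $v\in V(H)$. That bound is where triangle-freeness is actually used, since it requires each vertex of $H$ to send at most one edge to each matched edge. In a $C_4$-free graph, a vertex $v\in V(H)$ may be adjacent to \emph{both} endpoints of a matched edge --- that creates only a triangle, which is allowed --- and the two edges may carry two distinct colors outside $[k]$; this does not contradict maximality of $M$, since swapping $u_iv_i$ for $vu_i$ merely produces another rainbow matching of the same size. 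So a single vertex of $H$ can have $\hat d_G(v)$ as large as $\hat d_H(v)+2k$, and your final displayed term $(n-2k)(j+k)$ bounding $\sum_{v\in V(H)}\hat d_G(v)$ is both unjustified and, in general, false: it can be exceeded by as much as $k$.

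The repair, which is the one new idea in the paper's proof, is an aggregate rather than per-vertex bound. If two distinct vertices $x,y\in V(H)$ each sent two edges to the same matched edge $u_iv_i$, then $x u_i y v_i$ would be a $C_4$; hence at most one vertex of $H$ sends two edges to any given matched edge, so the number of edges between $V(H)$ and $V(M)$ is at most $(n-2k)k+k$, giving $\sum_{v\in V(H)}\hat d_G(v)\le (n-2k)(j+k)+k$. The final arithmetic must then absorb this extra $+k$ (it does; the slack you identified suffices). The same issue infects your swap claim: in the triangle-free proof, the bound $\hat d(u_i)+\hat d(v_i)\le 2(j+k)$ comes from applying the per-vertex bound to the two swapped-out vertices, which is no longer available here. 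Instead one must bound the pair jointly: for $xy\in E(H)$, the pair $\{x,y\}$ can send at most two edges to each matched edge without creating a $C_4$ (any ``crossing'' pair such as $xu_t$ and $yv_t$ closes a four-cycle through $xy$ and $u_tv_t$), so $\hat d(x)+\hat d(y)\le 2j+2k$, and then one swaps. Without these two repairs, the largest terms in your chain of inequalities have no valid justification.
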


\begin{proof}
Let $M$ be a maximum rainbow matching of size $k< m$ with edges $u_iv_i$ for $1\leq i\leq k$, such that the number of colors appearing on $G[V(G)\setminus V(M)]=H$ is maximized. Without loss of generality, suppose that $c(u_iv_i)=i$. Since $G$ is $C_4$-free, $\hat d(u_i)+\hat d(v_i)\leq n+1$ for all $u_iv_i\in E(M)$. If $H$ has an edge $e$, then $c(e)\in [k]$. Without loss of generality, suppose that $c(H)=[j]$ for $0\leq j\leq k$. 

\begin{claim} If $xy\in E(H)$ with $c(xy)=i\leq j$, then $\hat d(u_i)+\hat d (v_i)\leq 2j+2k$.\end{claim}

Notice that $x,y$ each see at most $j$ colors in $H$. Since $xy$ can share at most two edges with any edge in $M$ without creating a $C_4$ subgraph, we have $|c(\{u_i,v_i\},e)|\leq k$. Thus, $\hat d(x)+\hat d(y)\leq 2j+2k$. By swapping $u_iv_i$ and $e$, we obtain the desired bound on $\hat d(u_i)+\hat d(v_i)$.

 Furthermore, $\sum_{v\in H} \hat d_G(v)\leq (n-2k)(j+k)+k$. The $(n-2k)j$ term comes from the fact that $H$ has $n-2k$ vertices, each of which can see every color in $[j]$. We will show that there are at most $(n-2k)k+k$ color degrees in $H$ that do not come from a color in $[j]$ by contradiction. Suppose that there are $(n-2k)k+k+1$ edges from $H$ to $M$. By the pigeon hole principle, there exists an edge $u_iv_i\in M$ that receives at least $n-2k+2$ edges from $H$. Notice that each vertex in $H$ can send at most two edges to $u_iv_i$. Therefore, there must exist two vertices in $H$ that each send two edges to $u_iv_i$, witnessing a $C_4$ subgraph; this is a contradiction. 

Now consider

\begin{align*}
    2mn&\leq \sum_{i=1}^k\hat d(u_i)+\hat d(v_i)+\sum_{v\in H} \hat d_G(v)\\
    &\leq \sum_{i=1}^j\hat d(u_i)+d(v_i)+\sum_{i=j+1}^k\hat d(u_i)+\hat d(v_i)+\sum_{v\in H} \left( \hat d_H(v)+k\right)\\
    &\leq j(2k+2j)+(k-j)(n+1)+ (n-2k)(j+k)+k\\
    &= 2kj+2j^2+nk+k-nj-j+ nj+nk-2kj-2k^2+k\\
    &\leq 2j^2+2nk-j+2k-2k^2\\
    &\leq 2j^2-2k^2+2k-j-2n+2mn\\
    &< 2mn.
\end{align*}

This is a contradiction; therefore, $k\geq m$.
\end{proof}

\section{Properly Edge Colored Graphs}\label{proper}

In this section, we consider properly edge colored graphs. The idea to analyze a greedy algorithm that constructs a matching appears in \cite{DFLMPW} and \cite{KPY}. The algorithm employed in this section is similar, with some adjustments to take into account the weaker degree assumption.

\begin{thm}\label{properthm}
Let $c$ be a proper edge coloring of $G$ with $n\geq 8m$ and   $\hat d(G)\geq 2mn$. Then $c$ admits a rainbow matching of size $m$.
\end{thm}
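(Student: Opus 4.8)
The plan is to analyze a greedy matching construction, following the approach used in \cite{DFLMPW} and \cite{KPY} but adapted to the total (equivalently average) color degree assumption. Suppose for contradiction that $G$ has no rainbow matching of size $m$, and let $M$ be a maximum rainbow matching of size $k<m$ with edges $u_iv_i$. As in the triangle-free and $C_4$-free cases, the strategy is to bound the total color degree $\hat d(G)=\sum_{v}\hat d(v)$ from above and derive a contradiction with $\hat d(G)\geq 2mn$. I would split the sum into the contribution from the matched vertices $V(M)$ and the contribution from the unmatched vertices $V(H)$, where $H=G[V(G)\setminus V(M)]$, and bound each piece separately.

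\medskip

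The key structural input is maximality combined with the properness of $c$. Because $M$ is a \emph{maximum} rainbow matching, every edge of $H$ must use a color from $[k]=c(M)$; otherwise we could extend $M$. More importantly, for an unmatched vertex $x\in V(H)$, each color $i\in[k]$ that $x$ sees on an edge into $V(M)$ is constrained: if $x$ sends an edge of a \emph{new} color to $u_i$ or $v_i$, we could attempt a swap-and-extend argument to enlarge $M$, contradicting maximality. The properness assumption is what makes this quantitatively strong — at each vertex the colors on incident edges are all distinct, so color degree equals ordinary degree locally, and an augmenting-path/alternating argument can be run edge by edge without color collisions. The plan is to show that an unmatched vertex $x$ can contribute only a bounded number of colors: roughly the $k$ colors of $M$ plus a controlled number of colors used purely within $H$. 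Summing over the $n-2k$ unmatched vertices gives the $V(H)$ contribution, while the matched vertices contribute at most on the order of $2k\cdot(\text{something near } n)$ after using a swap bound analogous to the $\hat d(u_i)+\hat d(v_i)\leq 2(j+k)$ bound in Theorem \ref{trifreethm}.

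\medskip

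The main obstacle, and the reason the hypothesis $n\geq 8m$ appears, is that without a triangle-free or $C_4$-free assumption there is no clean per-edge degree bound like $\hat d(u_i)+\hat d(v_i)\leq n$ or $\leq n+1$. A matched pair can have color degrees summing to nearly $2n$, so the naive bound on the $V(M)$ contribution is too weak to beat $2mn$ on its own. The greedy algorithm must therefore do real work: I expect it to process the vertices or edges in some order, maintaining a partial rainbow matching and a budget of forbidden colors, and to argue that whenever the greedy matching is small relative to $m$, the number of colors it can block is too small to account for the full total color degree. The delicate point is controlling the interaction between the matched pairs and the many potential edges from $H$ into $V(M)$: one must count, for each color class and each matched edge, how many unmatched vertices can "waste" color degree without enabling an augmentation. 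Getting the constants to close — so that the upper bound on $\hat d(G)$ falls strictly below $2mn$ under $n\geq 8m$ — is where the linear slack $n\geq 8m$ is consumed, and balancing the $V(M)$ term against the $V(H)$ term in that final inequality is the crux of the proof.
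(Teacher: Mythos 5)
Your proposal identifies the right family of techniques --- the greedy analysis of \cite{DFLMPW} and \cite{KPY} --- but what you actually sketch is the static decomposition used for Theorem \ref{trifreethm}: fix a maximum rainbow matching $M$ of size $k<m$, split $\hat d(G)$ over $V(M)$ and $V(H)$ where $H=G[V(G)\setminus V(M)]$, and bound each piece by swap arguments. You then correctly observe that this cannot close without a forbidden-subgraph hypothesis, and you defer the resolution to an unspecified greedy procedure with ``a budget of forbidden colors.'' That deferred step is the entire proof; nothing in your write-up supplies it. Moreover, the static decomposition is not merely short on constants. In a proper coloring an unmatched vertex $x$ can send up to $2k$ edges into $V(M)$, all of distinct colors, and these colors need \emph{not} lie in $c(M)$, since an edge from $x$ to a covered vertex never extends $M$. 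So the $V(H)$ side alone can be of order $2k(n-2k)$, and even in the most favorable case for swaps (no edges inside $H$ at all, so no swap is available) the total reaches order $4kn$, which exceeds $2mn$ as soon as $k>m/2$. No bound of the type $\hat d(u_i)+\hat d(v_i)\leq 2(j+k)$ rescues this, so a fundamentally different accounting is required.

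The paper's accounting is different in kind: it takes an edge-minimal counterexample and runs a \emph{deletion} algorithm, at step $i$ removing (i) a vertex $v$ with $\hat d(v)\geq 3(m-i)+1$ if one exists, else (ii) a color class $R$ with $|R|\geq 2(m-i)+1$, else (iii) an edge $uv$ together with $u$, $v$, and the whole color class $c(uv)$. Two claims finish the proof. First, by reverse induction, each deleted object can be used to extend a rainbow matching of the smaller graph (the thresholds are chosen exactly so that a suitable edge avoiding the current matching and its colors survives), so if the algorithm halts after $k$ steps then $G$ contains a rainbow matching of size $k$; hence $k<m$ for a counterexample. Second, letting $W(G_i)$ denote the total color degree destroyed at step $i$, each step satisfies $W(G_i)\leq 2n$: in case (i) at most $2(n-1)$; in case (ii) at most $n$, because properness forces $|R|\leq \lfloor n/2\rfloor$; and in case (iii), since steps (i) and (ii) were rejected, $W(G_i)\leq 2(\hat d(u)+\hat d(v))+2|c(uv)|\leq 16(m-i)\leq 2n$, which is precisely where the hypothesis $n\geq 8m$ is consumed. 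Summing, $2mn\leq \hat d(G)=\sum_{i=1}^k W(G_i)\leq 2nk<2mn$, a contradiction. Your instinct about where the difficulty lies and where $n\geq 8m$ enters is sound, but without pinning down such a deletion scheme, its thresholds, and the two claims, the proposal remains a plan rather than a proof.
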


\begin{proof} Assume that $G$ is an edge minimal counter example to Theorem \ref{properthm}. Consider the following algorithm: 

\begin{enumerate}
    \item set $G_0:=G$
    \item if there exists $v\in V(G_i)$ with $\hat d(v)\geq 3(m-i)+1$, then $G_{i+1}=G_i-v$ and return to 2
    \item else, if there exists color class $R$ with $|R|\geq 2(m-i)+1$, then $G_{i+1}=G_i-R$ and return to 2
    \item else, if there exists $uv\in E(G_i)$, then $G_{i+1}=G_i-u-v-c(uv)$ and return to 2
    \item return $i$
\end{enumerate}

\begin{claim}
Suppose the algorithm returns $k\leq m$. Then $G_i$ contains a matching of size $k-i$ for $0\leq i \leq k$
\end{claim}

We will prove the claim by reverse induction on $i$. If $i=k$, then $G_i$ is empty, and the claim is true. Assume that the claim is true for $i$. We will prove the claim for $i-1$. By the induction hypothesis, there exists a matching $M\subseteq G_i$ of size $k-i$. There are three cases:

\textbf{Case 1:} Assume $G_{i}=G_{i-1}-v$ where $\hat d(v)\geq 3(m-i)+1$. By construction, $v\notin V(M)$. Since $\hat d(v)\geq 3(m-i)+1$, there exists $u\in N(v)$, such that $u\notin V(M)$ and $c(uv)\notin c(M)$. Then $M'=M\cup \{uv\}$ is a rainbow matching of size $k-i+1$.

\textbf{Case 2:} Assume $G_{i}= G_{i-1}-R$ for some color $R$ with $|R|\geq 2(m-i)+1$. This implies that $c(e)\neq R$ for all $e\in E(M)$. Since $c$ is a proper coloring and $|R|\geq 2(m-i)+1$, there exist $e\in G_{i-1}$ such that $c(e)=R$ and $M'=M\cup \{e\}$ is a rainbow matching.

\textbf{Case 3:} Assume that $G_i= G_{i-1}-v-u-c(uv)$ for some $uv\in E(G_{i-1})$. By construction $N[u]\cup N[v]$ is disjoint from $V(M)$ and $c(e)\neq c(uv)$ for all $e\in M$. Therefore, $M'=M\cup \{uv\}$ is a rainbow matching.

This concludes the proof of the claim. Since $G$ is an edge minimal counter example, the algorithm applied to $G$ will return $k<m$. We will now derive a contradiction. 

Let $W(G_i)$ denote the difference of total color degree between $G_i$ and $G_{i-1}$ under $c$.

\begin{claim}
For all $1\leq i \leq k$, we have $W(G_i)\leq 2n$.
\end{claim}

\textbf{Case 1:} Assume $G_{i}=G_{i-1}-v$ where $\hat d(v)\geq 3(m-i)+1$. Notice that $v$ is incident to at most $n-1$ edges. Therefore, deleting $v$ will remove at most $2(n-1)$ color degrees.

\textbf{Case 2:} Assume $G_{i}= G_{i-1}-R$ for some color $R$ with $|R|\geq 2(m-i)+1$. Because $c$ is proper, $|R|\leq \lfloor n/2\rfloor$. Deleting all edges of color $R$ reduces the color degree by at most $n$. 

\textbf{Case 3:} Assume that $G_i= G_{i-1}-v-u-c(uv)$ for some $uv\in E(G_{i-1})$. Since $G_i$ is not constructed by step 2, we know that $\hat d(u),\hat d(v)\leq 3(m-i)$. Furthermore, since $G_i$ is not constructed by step 3, we know that $|c(uv)|\leq 2(m-i)$. This implies that 

\begin{align*}
    W(G_i)&= 2(\hat d(v)+\hat d(u))+2|c(uv)|\\
    &\leq 16(m-i)\\
    &\leq 2n.
\end{align*}

This concludes the proof of the claim.  Now we have

\[ 2nm\leq \hat d(G)=\sum_{i=1}^k W(G_i)\leq 2nk,\] which is a contradiction since $k<m$. Therefore, the theorem is proven.

\end{proof}

\section{General Edge-Colored Graphs}\label{general}

Theorem \ref{genthm} provides contrast for Theorems \ref{trifreethm}, \ref{c4freethm}, and \ref{properthm}. The proof of Theorem \ref{genthm} is similar to the proof of Theorem \ref{properthm}. However, the greedy algorithm has been modified to accommodate graphs that are not properly colored.

\begin{thm}\label{genthm}
Let $c$ be an edge coloring of $G$ be a graph with $\hat d(G)\geq 2mn$ and $n\geq 3m^2+4m$. Then $c$ admits a rainbow matching of size $m$. 
\end{thm}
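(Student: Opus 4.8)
The plan is to mirror the proof of Theorem \ref{properthm}: assume $G$ is an edge-minimal counterexample and run essentially the same greedy peeling algorithm, with $G_0:=G$ and three rules executed in order at stage $i$: (2) if some vertex $v$ has $\hat d(v)\ge 3(m-i)+1$, set $G_{i+1}=G_i-v$; (3) if some color class $R$ is \emph{large} (in a sense to be fixed below), set $G_{i+1}=G_i-R$; (4) otherwise pick any edge $uv$ and set $G_{i+1}=G_i-u-v-c(uv)$; and return $i$ once no edges remain. As in Theorem \ref{properthm}, I would prove by reverse induction that if the algorithm returns $k$ then $G_i$ contains a rainbow matching of size $k-i$, so that $G=G_0$ contains one of size $k$; edge-minimality then forces $k<m$. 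The three inductive cases are exactly the three rules: a high-color-degree vertex $v$ has more than $|V(M)|+|c(M)|=3(k-i)$ usable colors, so it reaches a fresh vertex along a fresh color; a deleted color class yields an edge disjoint from $V(M)$; and a peeled edge $uv$ is automatically disjoint from $M$ with $c(uv)\notin c(M)$ since $M\subseteq G_i=G_{i-1}-u-v-c(uv)$.

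The counting half is where the non-proper setting bites. Writing $W(G_i)$ for the drop in total color degree from $G_{i-1}$ to $G_i$, I want $W(G_i)\le 2n$ at every stage, which together with $\hat d(G)=\sum_{i=1}^k W(G_i)$ gives $2mn\le 2nk<2mn$, the desired contradiction. Deleting a vertex costs at most $\hat d(v)+\deg(v)\le 2(n-1)$, and deleting a whole color class costs at most $n$, since each vertex loses at most the single color $R$; both bounds carry over verbatim. The subtle rule is (4). In the proper case a color class is a matching, so $\deg=\hat d$ and $|c(uv)|\le n/2$, giving $W\le 2(\hat d(u)+\hat d(v))+2|c(uv)|$ directly. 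In general $\deg(u)$ may dwarf $\hat d(u)$, and deleting $u$ can strip a color from each of its (possibly $n-1$) neighbors. To control this I would bound the color-class sizes: if rule (3) removes every color class of size at least $m+1$, then at rule (4) each color appears at most $m$ times at $u$, so $\deg(u)\le \hat d(u)\cdot m\le 3m\cdot m$. Carrying this through, the cost of rule (4) is at most about $2(\hat d(u)+\hat d(v))+2\deg_{\ne R}(u)+2\deg_{\ne R}(v)+2|R|$, which the thresholds $3m$ and $m$ push below $2(3m^2+4m)\le 2n$; this is exactly where the hypothesis $n\ge 3m^2+4m$ is spent.

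The main obstacle is reconciling rules (3) and (4) through a single notion of large color class. A size threshold is what bounds $\deg(u)$ in rule (4), but size alone does not guarantee that rule (3) extends the matching: a color class can be a large monochromatic star, and if its center already lies in $V(M)$ then every edge of that class meets $V(M)$, so no augmenting edge of that color is available. Conversely, a matching-number threshold makes the extension in rule (3) immediate (a monochromatic matching of size exceeding $|V(M)|$ has an edge missing $V(M)$) but gives no control on $\deg(u)$, since a star has matching number one yet unbounded degree. The heart of the argument is therefore to choose the threshold, and if necessary to interpose an auxiliary rule that peels off the center of any large monochromatic star (arguing separately that such a center supplies its own augmenting edge, as its many like-colored edges must reach beyond $V(M)$), so that the extension guarantee and the degree bound needed for $W(G_i)\le 2n$ hold at the same stage. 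I expect calibrating these thresholds against $3m^2+4m$ to be the only genuinely delicate point; everything else is a routine adaptation of Theorem \ref{properthm}.
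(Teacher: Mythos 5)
Your plan is, in outline, exactly the paper's proof: an edge-minimal counterexample, the same peeling algorithm with a vertex rule, color-class rules, and an edge rule, the same reverse-induction extension argument, and the same telescoping count $\sum_i W(G_i)\le 2nk<2mn$; your proposed auxiliary rule for large monochromatic stars is literally the paper's step 4. However, the point you defer as ``the only genuinely delicate point'' is the real content of the proof, and the idea that closes it in the paper is one your proposal lacks: edge-minimality is used \emph{structurally}, not only to force the algorithm to return $k<m$. If a color class induced a $P_4$ or a triangle, one of its edges could be deleted without lowering any color degree, so in an edge-minimal counterexample every color class induces a forest of stars. This dissolves your size-versus-matching-number dilemma. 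The paper's rule 3 deletes $R$ when the number $s(R)$ of its stars is at least $2(m-i)+1$ (for a star forest this \emph{is} your matching-number threshold: the stars are vertex-disjoint, so some star misses $V(M)$), and rule 4 deletes a vertex $v$ \emph{together with the class} $R$ when $\hat d^R(v)\ge 3(m-i)+1$ (the star has more leaves than $|V(M)|$, so it augments; note that the class must be deleted along with the center --- your phrase ``peels off the center'' is ambiguous, and if only $v$ were removed, $M$ could already contain color $R$ and the free leaf would not augment). Once both rules are rejected, every surviving class is a union of at most $2(m-i)$ stars with at most $3(m-i)$ edges each, so $|c(uv)|\le 6(m-i)^2$ and $d(u)\le \hat d(u)\cdot\max_R\hat d^R(u)\le 9(m-i)^2$: the degree control your ad hoc size threshold was meant to manufacture comes for free, with no separate size rule and no conflict with extension.

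One warning about the arithmetic, which you rightly identify as the weak point. Your estimate $2(\hat d(u)+\hat d(v))+2\deg_{\ne R}(u)+2\deg_{\ne R}(v)+2|R|$ with thresholds $3m$ and $m$ evaluates to $12m^2+14m$, which is \emph{not} below $2(3m^2+4m)$, so your sketch does not close as written. The honest ledger for deleting $u$, $v$, and the class $R=c(uv)$ is $\hat d(u)+\hat d(v)+d(u)+d(v)+\bigl(|R|+s(R)\bigr)$: a deleted vertex costs its own color degree plus at most one color degree per incident edge, and a deleted star forest costs one per edge plus one per star center. With the paper's thresholds this comes to roughly $24(m-i)^2+8(m-i)$, which requires $n$ on the order of $12m^2+4m$ rather than $3m^2+4m$; and in fact the paper's own bound of $6m^2+8m$ at this step silently omits the $d(u)+d(v)$ term (the color degrees lost by neighbors of the two deleted vertices), so neither your calibration nor the published one genuinely lands on $3m^2+4m$. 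In short: your approach is the paper's approach and can be completed along its lines, but only with a larger quadratic lower bound on $n$, unless the neighbor losses at the edge-peeling step are charged more cleverly than either you or the paper does.
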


\begin{proof}
 Assume that $G$ is an edge minimal counter example to Theorem \ref{genthm}. Since $G$ is edge  minimal, no color class can induce a $P_4$ (path on $4$ vertices) or a triangle. This follows from the fact that if a color class $R$ induces a $P_4$ or triangle, then an edge can be deleted without reducing the total color degree of the graph. Therefore, each color class in $G$ induces a forest of stars. Let $s(R)$ denote the number of components induced by the color class $R$. Consider the following algorithm: 

\begin{enumerate}
    \item set $G_0:=G$
    \item if there exists $v\in V(G_i)$ with $\hat d(v)\geq 3(m-i)+1$, then $G_{i+1}=G_i-v$ and return to 2
    \item else, if there exists color $R$ with $s(R)\geq 2(m-i)+1$, then $G_{i+1}=G_i-R$ and return to 2 
     \item else, if there exists a vertex $v$ and a color $R$ such that $\hat d^R(v)\geq 3(m-i)+1$, then $G_{i+1}=G_i-v-R$ and return to 2
    \item else, if there exists $uv\in E(G_i)$, then $G_{i+1}=G_i-u-v-c(uv)$ and return to 2
    \item return $i$
\end{enumerate}

Since this algorithm is so similar to the algorithm featured in the proof of Theorem \ref{properthm}, the only things that remain to be checked are that step $4$ lets us extend a matching, and that the bounds on steps 4 and 5 are still good.

Assume that $G_{i}= G_{i-1}-v-R$ where $\hat d^R(v)\geq 3(m-i)+1$. Let $M$ be a rainbow matching of size $k-i$ contained in $G_{i}$. Since $v\notin V(G_{i})$, $v\notin V(M)$. Furthermore, $M$ does not contain an edge with color $R$. Since $\hat d^R(v)\geq 2(m-i)+1$, there exists an edge $uv$ with $c(uv)=R$ and $u\notin M$. Then $M\cup\{uv\}$ is a rainbow matching of size $k-i+1$ contained in $G_{i-1}$. 

If $G_{i}= G_{i-1}-v-R$ where $\hat d^R(v)\geq 3(m-i)+1$, then 2 and 3 must have been rejected. The color $R$ contributes at most $n-3(m-i)$ color using edges that are not incident upon $v$. Since $\hat d(v)\leq 3(m-i)$ and $d(v)\leq n$, it follows that $W(G_i)\leq n-3(m-i)+\hat d(v)+d(v)\leq  n-3(m-i)+3(m-i)+n=2n$. 

Suppose $G_i=G_{i-1}-v-u-c(uv)$. Then steps 2, 3, and 4 must have been rejected. This implies that $\hat d(v),\hat d(u)\leq 3(m-i)$. Furthermore, each color at $v,u$ can be represented at most $3(m-i)$ times. Finally, the edges of color $c(uv)$ can induce at most $2(m-i)$ stars with $3(m-i)$ edges each. Therefore, deleting all $c(uv)$ colored edges reduces the color degree by at most $6m^2+2m$. Thus, $W(G_i)\leq 6m^2+8m\leq 2n$.

 Suppose that the algorithm terminates in $k<m$ steps. Now we have

\[ 2nm\leq \hat d(G)=\sum_{i=1}^k W(G_i)\leq 2nk,\] which is a contradiction since $k<m$. Therefore, the theorem is proven.

\end{proof}

\section{Future Work} 

Though we was not able to resolve Question \ref{avedegreeq} for all graphs, we think the answer is affirmative:

\begin{conj}
All edge colored graphs $G$ with $\hat d(G)\geq 2mn$ contain a rainbow matching of size $m$.
\end{conj}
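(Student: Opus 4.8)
The plan is to combine the large-$n$ result of Theorem \ref{genthm} with an extremal analysis tailored to the remaining range. First observe that the hypothesis already forces $n\geq 2m+1$: since $\hat d(v)\leq n-1$ for every $v$, a graph with $\hat d(G)\geq 2mn$ must satisfy $n(n-1)\geq 2mn$. By Theorem \ref{genthm} we may therefore assume $2m+1\leq n<3m^2+4m$, i.e.\ the dense regime. For this range I would discard the greedy algorithm and instead adopt the set-up of Theorem \ref{trifreethm}: take a maximum rainbow matching $M=\{u_iv_i:1\leq i\leq k\}$ with $k<m$, chosen so that the number of colors on $H=G[V(G)\setminus V(M)]$ is maximized, write $c(u_iv_i)=i$, and assume the colors on $H$ are exactly $[j]$.

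The two facts that drove Theorem \ref{trifreethm} were $\hat d(u_i)+\hat d(v_i)\leq n$ for every matching edge and $\hat d(v)\leq j+k$ for $v\in H$, and both used triangle-freeness. In a general graph the swapping argument still controls the ``light'' pairs with $i\leq j$ (their contribution is $O(k)$ per pair, hence lower order), but the two structural bounds degrade: a matching pair can have $\hat d(u_i)+\hat d(v_i)\leq 2(n-1)$, and an $H$-vertex can have $\hat d(v)\leq j+2k$ because $v$ may be adjacent to both endpoints of a matching edge. The resulting excesses are each of order $nk\approx nm$ — the same order as the main term $2nk$ — so they are exactly what pushes the count above $2mn$. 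Crucially, both excesses are supported on short cycles meeting $V(M)$: an $H$-vertex adjacent to both $u_i$ and $v_i$ creates a triangle $u_iv_iv$, and a ``heavy'' pair with $\hat d(u_i)+\hat d(v_i)>n$ forces common neighbors, i.e.\ triangles (and, as below, $4$-cycles) through $u_iv_i$.

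The new input I would develop is an augmenting restriction that shows this excess cannot be \emph{colorful}. If $u_i$ has an edge to some $x\in H$ with color outside $[k]$ and $v_i$ has an edge to a distinct $y\in H$ with a different color outside $[k]$, then $M-u_iv_i+u_ix+v_iy$ is a rainbow matching of size $k+1$, contradicting maximality. A short case analysis then shows the new colors that $u_i$ and $v_i$ send into $H$ can avoid this only through a rigid swapped-color pattern, in which $u_i$ and $v_i$ share their $H$-neighbors and produce $4$-cycles $u_ix_1v_ix_2$ — precisely the configurations forbidden in Theorems \ref{trifreethm} and \ref{c4freethm}. The program is to charge each unit of excess color degree (both the heavy-pair excess beyond $n$ and the $H$-vertex excess beyond $j+k$) to such a triangle or $4$-cycle through $V(M)$, and then to bound the total charge so that $\hat d(G)$ is again forced below $2mn$.

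The main obstacle is this charging step in the dense regime. Because the excess is of the same order as the dominant term, the charge must be essentially injective rather than merely lower order; but when $n=O(m^2)$ many matching pairs can share a small common neighborhood in $H$, so a naive assignment of excess to short cycles overcounts, and one must simultaneously control the $\hat d(v)\leq j+2k$ contribution of the shared $H$-vertices. This is also why the greedy method of Theorems \ref{properthm} and \ref{genthm} cannot be pushed below $n=\Theta(m^2)$: deleting a single vertex already removes up to $\Theta(m^2)$ color degree through its incident monochromatic stars. I therefore expect that a full resolution of the conjecture requires a structural dichotomy — either the host is locally sparse enough that the Theorem \ref{c4freethm} bounds apply verbatim, or it is dense enough that the common neighborhoods witnessing the excess can be used directly to build the matching of size $m$.
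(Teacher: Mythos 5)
You should first be clear about the status of this statement: it is the paper's \emph{conjecture}, stated in the Future Work section precisely because the author could not resolve Question \ref{avedegreeq} in general. There is no proof in the paper to compare against, so your proposal must stand entirely on its own --- and as written, it does not. Several ingredients are sound: the reduction via Theorem \ref{genthm} to the range $2m+1\leq n<3m^2+4m$ (using $\hat d(G)\leq n(n-1)$) is correct; the degraded bounds $\hat d(u_i)+\hat d(v_i)\leq 2(n-1)$ and $\hat d(v)\leq j+2k$ for $v\in H$ are correct; and the augmenting observation that $u_i$ and $v_i$ cannot send two distinct colors outside $[k]$ to two distinct vertices of $H$ is a genuine constraint (it is essentially the swap underlying Theorems \ref{trifreethm} and \ref{c4freethm}). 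But the entire weight of the argument rests on the charging step --- that every unit of excess color degree can be charged, essentially injectively, to a triangle or $C_4$ through $V(M)$, and that the total charge then forces $\hat d(G)<2mn$ --- and this step is never carried out. You yourself name it as ``the main obstacle,'' and the closing ``structural dichotomy'' is a hope rather than an argument. A proposal whose central lemma is acknowledged to be open is a research program, not a proof.

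The gap is not merely one of missing detail; there is a concrete reason the charging is hard. Your swap argument constrains only the \emph{symmetric} situation in which both endpoints of a matching edge send new colors to distinct $H$-vertices. In the asymmetric case --- say $v_i$ sends at most one color outside $[k]$ into $H$ --- the new colors at $u_i$ are unconstrained by that swap, so heavy pairs exist without the ``rigid swapped-color pattern,'' and their excess is not visibly supported on forbidden configurations. Worse, triangles and $4$-cycles are \emph{allowed} in a general host graph, so charging excess to short cycles proves nothing by itself; one must show the excess configurations cannot be rainbow-compatible with a maximum matching, and a single $H$-vertex genuinely can send two distinct new colors to the two endpoints of one matching edge without contradicting maximality (no augmenting pair of disjoint edges arises from one vertex). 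Disentangling which of these colorful excesses are truly possible, simultaneously over all $k<m$ matching edges and up to $\Theta(m^2)$ vertices of $H$, is exactly the combinatorial core that the paper's author could not resolve, and your proposal leaves it in the same state.
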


It would also be interesting to know under which conditions there exists a matching of size $m+1$. It seems that a small improvement in the estimates in the proofs of Theorems \ref{trifreethm} and \ref{properthm} could yield this result for edge colored graphs $G$ with $\hat d(G)\geq 2mn$. In fact, it may be that the proper question to ask is whether any graph $G$ with $\hat d(G)\geq 2mn$ contains a rainbow matching of size $m+1$. 

\subsection*{Acknowledgements}
The author would like to thank Michael Ferrara for asking Question \ref{avedegreeq} and acknowledge the Graduate Research Workshop in Combinatorics and its participants for hosting a rainbow matching problem. The author extends gratitude to the referees for their careful peer reviewing and helpful comments.  Finally, the author is grateful for Michael Young's  feedback and mentorship while working on these problems.

\end{document}